\documentclass[12pt,reqno]{amsart}

\usepackage[a4paper,left=32mm,right=32mm,top=32mm,bottom=32mm]{geometry}
\usepackage{amsmath,amsthm,amssymb,mathtools}
\usepackage{microtype}
\usepackage[dvipsnames]{xcolor}
\usepackage[colorlinks=true,linkcolor=blue,citecolor=blue,urlcolor=blue]{hyperref}
\usepackage[capitalize,noabbrev]{cleveref}
\usepackage{enumitem}

\theoremstyle{plain}
\newtheorem{theorem}{Theorem}[section]
\newtheorem{lemma}[theorem]{Lemma}

\newtheorem{corollary}[theorem]{Corollary}
\theoremstyle{definition}

\newtheorem*{problem}{The lifting problem}
\theoremstyle{remark}
\newtheorem{remark}[theorem]{Remark}

\newcommand{\Pic}{\operatorname{Pic}}
\newcommand{\Aut}{\operatorname{Aut}}
\newcommand{\id}{\operatorname{id}}
\newcommand{\U}{\operatorname{U}}

\newcommand{\Ad}{\operatorname{Ad}}
\newcommand{\wG}{\smash{\widehat{G}}}
\newcommand{\wpic}{\smash{\widehat{p}}}
\newcommand{\wS}{\smash{\widehat{S}}}
\newcommand{\wsig}{\smash{\widehat{\sigma}}}
\newcommand{\wom}{\smash{\widehat{\omega}}}

\title[Lifting strongly graded rings]{Lifting strongly graded rings via overgroups}
\author{Emma Husen}
\address{Linnaeus university, 352 52 V\"axj\"o, Sweden}
\email{jh224jz@student.lnu.se}
\author{Stefan Wagner}
\address{Blekinge Institute of Technology, 371 79 Karlskrona, Sweden}
\email{stefan.wagner@bth.se}
\date{\today}
\subjclass[2020]{Primary 16W50; Secondary 16D20, 20J06}
\keywords{Strongly graded ring, graded extension, invertible bimodule, Picard group, factor system}

\begin{document}

\begin{abstract}
Motivated by lifting problems for principal bundles, we study the following algebraic problem. 
Let $\wG$ be a group with identity element $e$, let $G \leq \wG$, and let $S$ be a strongly $G$-graded unital ring with principal component $R:=S_e$. 
We ask whether the given grading extends to a strong $\wG$-grading without altering its $G$-homogeneous components. 
For every extension $\wpic$ of the Picard homomorphism of $S$, we construct a characteristic class in the relative third cohomology group
\[
    H^3_{\wpic}(\wG,G;\U(Z(R))),
\]
where $Z(R)$ denotes the center of $R$ and $\U(Z(R))$ its group of units.
The~vanishing of this class is equivalent to the existence of a lift. 
When it vanishes, the relative second cohomology group
\[
    H^2_{\wpic}(\wG,G;\U(Z(R)))
\]
acts simply transitively on the equivalence classes of lifts with Picard
homomorphism $\wpic$. 
We also give a proof of the criterion that an $H$-grading is strong if and only
if both its restriction to a normal subgroup $N \trianglelefteq H$ and its induced $H/N$-grading are strong. 
We illustrate the theory through a range of examples.
\end{abstract}

\maketitle

\enlargethispage{\baselineskip}
\section{Introduction}\label{sec:intro
}

Lifting problems for principal bundles arise naturally in geometry. 
A classical example is the construction of spin structures. 
For an oriented Riemannian manifold of dimension $n \geq 2$, one asks whether its oriented orthonormal frame bundle can be lifted from $\mathrm{SO}(n)$ to $\mathrm{Spin}(n)$ along the double covering $\mathrm{Spin}(n) \to \mathrm{SO}(n)$.
This is both an existence and a classification problem: a spin structure
exists precisely when the second Stiefel--Whitney class vanishes, and, when spin structures exist, their equivalence classes form a torsor under a first cohomology group; see, for example,~\cite[Chap.~2]{friedrich2000}.

A corresponding lifting problem for noncommutative principal bundles was studied in the $C^*$-algebraic setting by the second author in~\cite{Wa25}.

The present paper considers a purely algebraic lifting problem suggested by this picture. 
Strongly graded rings provide an algebraic model for free actions of discrete groups; see, for example,~\cite[Chap.~8]{Mont93}.
We ask whether a given strong~grading by a group $G$ can be extended to a strong grading by a larger group $\wG$ without altering the original homogeneous components.

More precisely, throughout the paper, let $\wG$ be a group with identity element $e$, let $G \leq \wG$, and let
\[
    S=\bigoplus_{g\in G}S_g
\]
be a strongly $G$-graded unital ring with principal component $R := S_e$. 

\begin{problem}
Determine whether there exists a strongly $\wG$-graded unital ring
\[
    \wS = \bigoplus_{x \in \wG} \wS_x
\]
containing $S$ as a unital subring and satisfying $\wS_g = S_g$ for all $g \in G$.
Such a ring is called a \emph{$\wG$-lift of $S$}. 
When such lifts exist, classify them up to $\wG$-graded ring isomorphisms whose restriction to $S$ is $\id_S$.
\end{problem}

For every $\wG$-lift of $S$,
\[
    \wS_e = R
    \qquad
    \text{and}
    \qquad
    \bigoplus_{g \in G} \wS_g = S.
\]
Thus, a lift adds homogeneous components indexed by $\wG \setminus G$ while
leaving the given $G$-graded ring unchanged.

The lifting problem has a natural formulation in terms of factor systems. 
The homogeneous components of a strongly graded ring are invertible bimodules over its principal component, while multiplication is encoded by compatible bimodule isomorphisms. 
Consequently, a lift of $S$ corresponds to an extension of its factor system from $G$ to $\wG$.

This reformulation separates the lifting problem into two successive questions:
\begin{enumerate}
    \item Does the Picard homomorphism of $S$, $p: G \to \Pic(R)$, $g \mapsto [S_g]$, extend to a group homomorphism $\wpic: \wG \to \Pic(R)$?
    \item For such an extension $\wpic$, can the representatives $M_x$ of the classes $\wpic(x)$, $x \in \wG$, be chosen with $M_g = S_g$ for all $g \in G$ and be equipped with associative multiplication maps extending those of $S$?
\end{enumerate}

The first question gives an immediate obstruction at the level of the Picard~group. To address the second, let $Z(R)$ denote the center of $R$ and let $\U(Z(R))$ denote its group of units. 
For each extension $\wpic$, we construct a characteristic class
\[
    \kappa_S(\wpic)
    \in
    H^3_{\wpic}(\wG,G;\U(Z(R))).
\]
This class measures the failure of chosen multiplication maps to be associative while remaining fixed on $G$. 
We prove that a $\wG$-lift of $S$ with Picard homomorphism $\wpic$ exists if and only if $\kappa_S(\wpic)=0$. 
When this class vanishes, the relative cohomology group
\[
    H^2_{\wpic}(\wG,G;\U(Z(R)))
\]
acts simply transitively on the equivalence classes of such lifts.

We also use a complementary recognition criterion for strong gradings over normal subgroups. 
More precisely, if an $H$-graded ring is restricted to a normal subgroup $N \trianglelefteq H$ and simultaneously coarsened to an $H/N$-grading, then the original grading is strong if and only if both associated gradings are strong. 
This result was obtained independently by the first author in her master's thesis. 
After the first version of the present paper appeared on arXiv, Roozbeh Hazrat kindly pointed out to us that the same result had previously been proved, in the more general setting of graded rings with graded local units, in \cite[Lem.~2.4]{ClHaRi19}. 
The criterion is particularly useful for candidate graded extensions arising independently of the factor-system construction.


The paper is organized as follows. 
In Section~\ref{sec:prelim}, we recall strongly graded rings, invertible bimodules, factor systems, and the associated group-cohomological obstruction theory. 
In Section~\ref{sec:lifting}, we formulate the lifting problem in terms of factor systems and prove the obstruction and classification results; see~\cref{thm:lift-factor-correspondence,thm:obstruction,thm:classification}.
In Section~\ref{sec:normal-subgroups}, we prove the recognition criterion for strong~gradings over normal subgroups. 
Finally,~\cref{sec:examples} illustrates the theory through cyclic overgroups, the double covering $\mathrm{Spin}(2) \to \mathrm{SO}(2)$, crossed products, Picard obstructions, twisted group
rings, and semidirect-product lifts.

A natural direction for future work is to formulate and study an analogous
lifting problem for Hopf--Galois extensions, with the present theory of
strongly graded rings serving as a first model case.

\section{Preliminaries}\label{sec:prelim}

All rings are associative and unital, all ring homomorphisms are unital, and
all module actions are unital. 
No commutativity assumptions are imposed. 

Let $H$ be a group with identity element $e$, and let $A$ be a ring. 
An $H$-grading on $A$ is a family $(A_h)_{h\in H}$ of additive subgroups of $A$
such that
\[
    A = \bigoplus_{h \in H} A_h
\]
and $A_h A_k \subseteq A_{hk}$ for all $h,k \in H$.
The subgroups $A_h$, $h \in H$, are called the \emph{homogeneous components} of the grading. 
The component $A_e$ is called the \emph{principal~component}. 
The grading is called \emph{strong} if $A_h A_k = A_{hk}$ for all $h,k \in H$.

The identity of an $H$-graded unital ring belongs to its principal component.
If the grading is strong, then $A_h A_{h^{-1}} = A_e = A_{h^{-1}} A_h$ for all $h \in H$.
In particular, there exist finite families $u_i \in A_h$ and $v_i \in A_{h^{-1}}$ such that
\[
    \sum_i u_iv_i=1.
\]

For the discussion of invertible bimodules and factor systems below, the principal component $A_e$ is assumed to be the fixed ring $R$. 
This includes the given grading when $(H,A) = (G,S)$ and any possible lift when $(H,A) = (\wG,\wS)$.

An $R$-bimodule $M$ is called \emph{invertible} if there exists an $R$-bimodule $N$ such that
\[
    M \otimes_R N \cong R
    \qquad
    \text{and}
    \qquad
    N \otimes_R M \cong R
\]
as $R$-bimodules. 
The isomorphism class of an invertible $R$-bimodule $M$ is denoted by $[M]$.
These classes form the \emph{Picard group} $\Pic(R)$, with multiplication given by
\[
    [M][N] : =[M \otimes_R N].
\]
The identity element is $[R]$, and the inverse of $[M]$ is represented by any
inverse bimodule of $M$.

The following standard result connects strong gradings with invertible bimodules:

\begin{lemma}[{see, e.g.,~\cite[Prop.~I.3.6]{NuOy04}}]\label{lem:mult-iso}
Let $A = \bigoplus_{h \in H} A_h$ be a strongly $H$-graded ring with principal component $R$. 
For all $h,k\in H$, multiplication induces an $R$-bimodule~isomorphism
\[
    m_{h,k}: A_h \otimes_R A_k \to A_{hk},
    \qquad
    a \otimes b \mapsto ab.
\]
Consequently, $A_h$ is an invertible $R$-bimodule with inverse $A_{h^{-1}}$.
\end{lemma}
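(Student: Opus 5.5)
The plan is to check in turn that $m_{h,k}$ is well defined and $R$-bilinear, that it is surjective, and that it is injective; invertibility of $A_h$ then follows by specializing $k=h^{-1}$. Well-definedness is immediate: the map $A_h\times A_k\to A_{hk}$, $(a,b)\mapsto ab$, is $R$-balanced by associativity, since $(ar)b=a(rb)$ for $r\in R=A_e$. Left and right $R$-linearity hold for the same reason. Surjectivity is precisely the strong grading hypothesis $A_hA_k=A_{hk}$.

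Injectivity is the main step. I would build an explicit inverse using the elements provided by strongness. Since $A_{h^{-1}}A_h=A_e$, choose finite families $v_i\in A_{h^{-1}}$ and $u_i\in A_h$ with $\sum_i u_i v_i=1$ (equivalently, use $A_hA_{h^{-1}}=A_e$). Define
\[
    \psi: A_{hk}\to A_h\otimes_R A_k,\qquad c\mapsto \sum_i u_i\otimes v_i c,
\]
which makes sense because $v_i c\in A_{h^{-1}hk}=A_k$. Then $m_{h,k}\psi(c)=\sum_i u_iv_ic=c$, and for a generator $a\otimes b$ we get
\[
    \psi(ab)=\sum_i u_i\otimes v_iab=\sum_i u_i(v_ia)\otimes b=\Bigl(\sum_i u_iv_i\Bigr)a\otimes b=a\otimes b .
\]
The second equality uses $v_ia\in A_{h^{-1}h}=A_e=R$, so this scalar may be moved across the tensor sign. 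Hence $\psi\circ m_{h,k}$ is the identity on generators, and by additivity on all of $A_h\otimes_R A_k$. The only delicate point is this placement of the $R$-valued product $v_ia$, and the choice of $\sum_i u_iv_i=1$ with $u_i\in A_h$, $v_i\in A_{h^{-1}}$ is exactly what makes it work.

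For the consequence, take $k=h^{-1}$, which gives $A_h\otimes_R A_{h^{-1}}\cong A_e=R$. Applying the same statement with $h$ replaced by $h^{-1}$ gives $A_{h^{-1}}\otimes_R A_h\cong R$, both as $R$-bimodules. Thus $A_h$ is invertible with inverse $A_{h^{-1}}$.
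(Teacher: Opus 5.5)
Your proof is correct and is the standard argument: the paper gives no proof of its own but cites \cite[Prop.~I.3.6]{NuOy04}, where injectivity comes from the same decomposition $\sum_i u_iv_i=1$ with $u_i\in A_h$, $v_i\in A_{h^{-1}}$ that the paper records just before the lemma, and your explicit inverse $\psi$ is that argument written as a two-sided inverse. One trivial slip: it is $A_hA_{h^{-1}}=A_e$, not $A_{h^{-1}}A_h=A_e$, that yields $\sum_i u_iv_i=1$ with $u_i\in A_h$ and $v_i\in A_{h^{-1}}$, which is harmless here since strongness gives both equalities.
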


It follows from Lemma~\ref{lem:mult-iso} that every strongly $H$-graded ring $A$ with principal component $R$ determines a group homomorphism
\begin{equation}\label{eq:picard-map}
    p: H \to \Pic(R),
    \qquad
    h \mapsto [A_h],
\end{equation}
called the \emph{Picard homomorphism} of $A$.

The homogeneous components of a strongly graded ring do not determine the
ring by themselves; their multiplication maps must also be recorded. 

A \emph{normalized factor system} for $(H,R)$ consists of a family $(M_h)_{h \in H}$ of invertible $R$-bimodules, with $M_e = R$, together with $R$-bimodule isomorphisms
\[
    \mu_{h,k}: M_h \otimes_R M_k \to M_{hk},
    \qquad
    h,k\in H.
\]
The maps $\mu_{e,h}$ and $\mu_{h,e}$ are required to be the canonical module actions, and
\begin{equation}\label{eq:associativity}
    \mu_{hk,\ell} \circ (\mu_{h,k} \otimes \id_{M_\ell})
    =
    \mu_{h,k\ell} \circ (\id_{M_h} \otimes \mu_{k,\ell})
\end{equation}
is required to hold for all $h,k,\ell\in H$. 
We abbreviate these families by $M=(M_h)_{h \in H}$ and
$\mu=(\mu_{h,k})_{h,k \in H}$.

Given a normalized factor system $(M,\mu)$ for $(H,R)$, the Abelian group
\[
    A(M,\mu) := \bigoplus_{h \in H}M_h
\]
becomes a unital ring when the multiplication is defined on homogeneous elements by
\[
    m_hm_k := \mu_{h,k}(m_h \otimes m_k)
\]
for all $h,k \in H$, $m_h \in M_h$, and $m_k \in M_k$, and then extended bilinearly.
Condition~\eqref{eq:associativity} is precisely the associativity of this multiplication, while the normalization conditions ensure that $1_R \in M_e$ is its identity. 
Since every $\mu_{h,k}$ is surjective, the resulting $H$-grading is strong.

Conversely, let $A = \bigoplus_{h \in H} A_h$ be a strongly $H$-graded ring with principal~component $R$. 
By Lemma~\ref{lem:mult-iso}, the homogeneous components $(A_h)_{h \in H}$, together with the multiplication maps
\[
    m_{h,k}: A_h \otimes_ R A_k \to A_{hk}
\]
for all $h,k \in H$, form a normalized factor system.

Two normalized factor systems $(M,\mu)$ and $(N,\nu)$ for $(H,R)$ are called~\emph{isomorphic} if there exists a family of $R$-bimodule isomorphisms $\varphi_h: M_h \to N_h$, $h \in H$, such that $\varphi_e = \id_R$ and
\[
    \varphi_{hk} \circ \mu_{h,k}
    =
    \nu_{h,k} \circ (\varphi_h \otimes \varphi_k)
\]
for all $h,k \in H$.

The preceding constructions yield the following correspondence:

\begin{theorem}\label{thm:factor-correspondence}
There is a bijection between:
\begin{enumerate}[label=(\alph*)]
    \item graded-isomorphism classes of strongly $H$-graded rings with principal component $R$, where the graded isomorphisms restrict to the identity on $R$;
    \item isomorphism classes of normalized factor systems for $(H,R)$.
\end{enumerate}
\end{theorem}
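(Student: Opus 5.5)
We will prove Theorem~\ref{thm:factor-correspondence} by showing that the two constructions described above descend to mutually inverse maps on isomorphism classes. Define $\Phi$ from (a) to (b) by sending a strongly $H$-graded ring $A$ with $A_e=R$ to its factor system $\bigl((A_h)_h,(m_{h,k})_{h,k}\bigr)$; this is a normalized factor system by Lemma~\ref{lem:mult-iso}. Associativity~\eqref{eq:associativity} holds because multiplication in $A$ is associative, and the normalization holds because $m_{e,h}$ and $m_{h,e}$ are the restrictions of multiplication, i.e.\ the module actions. Define $\Psi$ from (b) to (a) by $(M,\mu)\mapsto A(M,\mu)$; this is a strongly graded ring with principal component $M_e=R$, as recorded above.

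The first step is well-definedness on classes. Let $f: A\to B$ be a graded isomorphism with $f|_R=\id_R$. Its restrictions $\varphi_h := f|_{A_h}: A_h\to B_h$ are additive bijections. They are $R$-bimodule maps because $f(ra)=f(r)f(a)=rf(a)$ and similarly on the right. We have $\varphi_e=\id_R$, and multiplicativity of $f$ gives $\varphi_{hk}\circ m_{h,k}=m'_{h,k}\circ(\varphi_h\otimes\varphi_k)$ after checking on elementary tensors, which generate $A_h\otimes_R A_k$. Conversely, an isomorphism $(\varphi_h)$ of factor systems $(M,\mu)\to(N,\nu)$ yields $f:=\bigoplus_h\varphi_h: A(M,\mu)\to A(N,\nu)$. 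This map is additive and bijective, preserves the grading, restricts to $\id_R$ since $\varphi_e=\id_R$, and is multiplicative on homogeneous elements by the compatibility identity, hence multiplicative by bilinearity.

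The second step shows that the two maps are mutually inverse on classes. For $\Phi\circ\Psi$: the factor system of $A(M,\mu)$ has components $M_h$, and its multiplication maps $M_h\otimes_R M_k\to M_{hk}$ send $m_h\otimes m_k$ to $m_hm_k=\mu_{h,k}(m_h\otimes m_k)$. These maps therefore coincide with $\mu_{h,k}$, so $\Phi\Psi(M,\mu)=(M,\mu)$ on the nose. For $\Psi\circ\Phi$: the canonical map $A(A_h,m)=\bigoplus_h A_h\to A$, given by summing components, is bijective because $A=\bigoplus_h A_h$. It is multiplicative because the product in $A(A_h,m)$ of homogeneous elements is $m_{h,k}(a\otimes b)=ab$. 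It is graded and is the identity on $R$, so $A\cong\Psi\Phi(A)$ in the sense of (a).

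No step is a genuine obstacle; the only points needing care are bookkeeping. The first is that the maps $\varphi_h\otimes\varphi_k$ and $\mu_{h,k}$ are determined by their values on elementary tensors, so every identity may be checked there. The second is that the normalization $\varphi_e=\id_R$ matches exactly the requirement that graded isomorphisms restrict to the identity on $R$. Without this matching, the notion of isomorphism in (b) would be too coarse or too fine.
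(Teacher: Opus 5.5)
Your proposal is correct and follows the paper's approach. The paper gives no separate proof beyond the two constructions $(M,\mu)\mapsto A(M,\mu)$ and $A\mapsto \bigl((A_h)_h,(m_{h,k})_{h,k}\bigr)$ described just before the theorem, and your argument supplies the routine verifications that the paper leaves implicit: compatibility with the two notions of isomorphism, and that the constructions are mutually inverse on classes. The one detail worth making explicit is that $\Phi\Psi(M,\mu)=(M,\mu)$ holds on the nose because the $R$-bimodule structure that the ring $A(M,\mu)$ induces on each $M_h$ is the original one, which is exactly what the normalization of $\mu_{e,h}$ and $\mu_{h,e}$ as the canonical actions guarantees.
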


The Picard group acts naturally on the center of $R$. 
For every invertible $R$-bimodule $M$, there exists a unique automorphism
\[
    \Delta_M \in \Aut(Z(R))
\]
characterized by $m z = \Delta_M(z) m$ for all $m \in M$ and $z \in Z(R)$; see, for example, \cite[Lem.~I.3.11.2]{NuOy04}.
The automorphism $\Delta_M$ depends only on the isomorphism class of~$M$, and the assignment
\[
    \Delta: \Pic(R) \to \Aut(Z(R)),
    \qquad
    [M] \mapsto \Delta_M,
\]
is a group homomorphism, called the \emph{Fröhlich map}.

Consequently, every group homomorphism $p:H \to \Pic(R)$ induces an action of $H$ on $\U(Z(R))$ given by
\begin{equation}\label{eq:froehlich}
    {}^hu = \Delta_{p(h)}(u),
    \qquad
    h \in H, u \in \U(Z(R)).
\end{equation}
The corresponding group cohomology groups are denoted by
\[
    H^n_p(H,\U(Z(R))),
    \qquad
    n \geq 0,
\]
where the subscript $p$ records the induced action; see, for example,~\cite[Chap.~IV]{MacLane95} for background on group cohomology.

Given a group homomorphism $p:H \to \Pic(R)$, a normalized factor system $(M,\mu)$ for $(H,R)$ is said to \emph{realize} $p$ if $[M_h]=p(h)$ for all $h \in H$.
The obstruction to the existence of such a factor system is measured by a characteristic class
\[
    \kappa(p) \in H^3_p (H,\U(Z(R))).
\]
More precisely, the following standard result holds:

\begin{theorem}[{see, e.g.,~\cite[Prop.~I.3.18]{NuOy04}}]\label{thm:factor-system-cohomology}
Let $p:H \to \Pic(R)$ be a group~homomorphism.
\begin{enumerate}[label=(\roman*)]
    \item There exists a normalized factor system realizing $p$ if and only
    if $\kappa(p) = 0$.
    \item If $\kappa(p)=0$, then $H^2_p(H,\U(Z(R)))$ acts simply transitively on the isomorphism classes of normalized factor systems realizing $p$.
\end{enumerate}
\end{theorem}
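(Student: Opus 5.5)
The plan is to reproduce the classical construction of $\kappa(p)$ and then run the standard obstruction and torsor arguments. First, for each $h\in H$ I will choose an invertible $R$-bimodule $M_h$ with $[M_h]=p(h)$, taking $M_e=R$. Since $p$ is a homomorphism, $M_h\otimes_R M_k\cong M_{hk}$, so I can choose $R$-bimodule isomorphisms $\mu_{h,k}\colon M_h\otimes_R M_k\to M_{hk}$, with $\mu_{e,h},\mu_{h,e}$ the canonical actions. The key input is that the automorphism group of an invertible bimodule is central: $\Aut_{R\text{-}R}(M)\cong \U(Z(R))$, where $u\in\U(Z(R))$ acts by left multiplication. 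I will prove this by reducing to $M=R$ through the equivalence $-\otimes_R M$. The $R$-bimodule endomorphisms of $R$ are exactly multiplication by central elements, so the automorphisms are the central units. Comparing the two sides of \eqref{eq:associativity} therefore defines a unique $\alpha(h,k,\ell)\in\U(Z(R))$ with
\[
\mu_{hk,\ell}\circ(\mu_{h,k}\otimes\id)=\alpha(h,k,\ell)\,\mu_{h,k\ell}\circ(\id\otimes\mu_{k,\ell}),
\]
and normalization gives $\alpha=1$ whenever an argument equals $e$.

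Next I will show that $\alpha$ is a normalized $3$-cocycle for the action \eqref{eq:froehlich}. The tool is a pentagon argument: evaluate the two bracketings of $M_h\otimes M_k\otimes M_\ell\otimes M_n\to M_{hk\ell n}$ around the five routes. To pass a scalar through a tensor factor, I use $z\otimes m = m\otimes\Delta_M^{-1}(z)$-type identities and the defining property $mz=\Delta_M(z)m$. These identities are exactly what produce the twisted factor ${}^h\alpha(k,\ell,n)$. The main bookkeeping obstacle is doing this sign- and twist-careful computation so that it yields precisely $\delta\alpha=1$ rather than a variant.

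Then I will check independence of choices, which lets me define $\kappa(p)=[\alpha]$. Replacing $\mu_{h,k}$ by $\beta(h,k)\mu_{h,k}$ with $\beta$ a normalized $2$-cochain multiplies $\alpha$ by $\delta\beta^{\pm1}$. Replacing $M_h$ by an isomorphic bimodule via $\varphi_h$ and transporting $\mu$ leaves $\alpha$ unchanged. For (i): if a realizing factor system exists, then $\alpha=1$ for that choice, so $\kappa(p)=0$. Conversely, if $\alpha=\delta\beta$, then rescaling $\mu$ by $\beta^{-1}$ makes the system associative.

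For (ii), fix a realizing system $(M,\mu)$. A normalized $2$-cocycle $\beta$ acts by $(M,\mu)\mapsto(M,\beta\mu)$, and associativity is preserved exactly because $\delta\beta=1$. This action is transitive: any realizing system $(N,\nu)$ can be transported along chosen isomorphisms $M_h\cong N_h$ to the form $(M,\beta\mu)$, where $\beta$ is forced to be a cocycle by comparing the two associativity identities. For freeness, an isomorphism $(M,\mu)\cong(M,\beta\mu)$ consists of automorphisms $\varphi_h=c(h)\in\U(Z(R))$ with $c(e)=1$, and the compatibility condition reads $\beta=\delta c$, up to the twist coming from passing $c(k)$ across $M_h$. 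Conversely, coboundaries give isomorphic systems in the same way. Hence the action descends to a simply transitive action of $H^2_p(H,\U(Z(R)))$.
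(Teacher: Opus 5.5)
Your proposal is correct and is essentially the argument the paper itself uses: for this cited result the paper gives no separate proof, but Lemma~\ref{lem:associator-cocycle} and Theorems~\ref{thm:obstruction} and~\ref{thm:classification} carry out exactly your steps in the relative setting (associator, pentagon computation, rescaling and transport for independence, twisting action with transitivity and freeness), and they reduce to this statement when $G=\{e\}$. One small correction: with your convention for $\alpha$, replacing $\mu$ by $\beta\mu$ gives $\alpha'=(\delta\beta)^{-1}\alpha$, so when $\alpha=\delta\beta$ you must rescale by $\beta$, not by $\beta^{-1}$.
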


\section{The lifting problem}\label{sec:lifting}

The canonical normalized factor system of $S$ consists of the family $(S_g)_{g\in G}$ together with the multiplication maps
\[
    m_{g,h}: S_g \otimes_R S_h \to S_{gh},
    \qquad
    g,h \in G.
\]
By a slight abuse of notation, we denote this factor system by $(S,m)$.

A normalized factor system $(M,\mu)$ for $(\wG,R)$ is said to \emph{extend $(S,m)$} if $M_g = S_g$ and $\mu_{g,h} = m_{g,h}$ for all $g,h\in G$.

Two extensions $(M,\mu)$ and $(N,\nu)$ of $(S,m)$ are called \emph{equivalent} if there exists an isomorphism of factor systems $\varphi: (M,\mu) \to (N,\nu)$ such that $\varphi_g = \id_{S_g}$ for all $g \in G$.

The factor-system correspondence in Theorem~\ref{thm:factor-correspondence} admits the following relative version:

\begin{theorem}\label{thm:lift-factor-correspondence}
There is a bijection between:
\begin{enumerate}[label=(\alph*)]
    \item equivalence classes of $\wG$-lifts of $S$;
    \item equivalence classes of normalized factor systems for $(\wG,R)$
    extending $(S,m)$.
\end{enumerate}
\end{theorem}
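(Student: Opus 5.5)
We will obtain the bijection by restricting the correspondence of Theorem~\ref{thm:factor-correspondence} (with $H = \wG$) to the relevant subclasses and checking that the equivalence relations match. Equivalence of $\wG$-lifts is understood as in the lifting problem: $\wG$-graded ring isomorphisms $\wS \to \wS'$ whose restriction to $S$ is $\id_S$.

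First, define the map from (a) to (b). Given a $\wG$-lift $\wS$ of $S$, Lemma~\ref{lem:mult-iso} gives its canonical normalized factor system $(\wS_x)_{x\in\wG}$ with multiplication maps $\widehat m_{x,y}$. Since $\wS_e = R$ and $\wS_g = S_g$ for $g \in G$, we have $M_g = S_g$. Since $S$ is a subring of $\wS$, the multiplication of $\wS$ restricted to $S_g\times S_h$ agrees with that of $S$, so $\widehat m_{g,h} = m_{g,h}$. Hence this factor system extends $(S,m)$.

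Next, check that the map is well defined on classes. Let $\Phi:\wS\to\wS'$ be an equivalence of lifts, and put $\varphi_x := \Phi|_{\wS_x}$. Since $\Phi$ is a graded ring isomorphism restricting to $\id_S$, each $\varphi_x$ is an $R$-bimodule isomorphism (as $\Phi|_R=\id_R$), $\varphi_g = \id_{S_g}$, and multiplicativity of $\Phi$ gives $\varphi_{xy}\circ \widehat m_{x,y} = \widehat m'_{x,y}\circ(\varphi_x\otimes\varphi_y)$. So $\varphi$ is an equivalence of extensions.

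Then define the inverse map from (b) to (a). Given an extension $(M,\mu)$ of $(S,m)$, form $A(M,\mu)=\bigoplus_{x\in\wG}M_x$. It is a strongly $\wG$-graded unital ring, as noted in the preliminaries. The subgroup $\bigoplus_{g\in G}M_g=\bigoplus_{g\in G} S_g = S$ is closed under multiplication, and there the multiplication is given by $\mu_{g,h}=m_{g,h}$, so it coincides with the ring structure of $S$. It contains $1_R$, so $S$ is a unital subring with $A(M,\mu)_g = S_g$, and $A(M,\mu)$ is a $\wG$-lift. An equivalence $\varphi:(M,\mu)\to(N,\nu)$ induces $\Phi=\bigoplus_x\varphi_x$. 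This $\Phi$ is a graded ring isomorphism by the compatibility identity, and it is $\id_S$ on $S$ because $\varphi_g=\id_{S_g}$.

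Finally, the two maps are mutually inverse on classes. The factor system of $A(M,\mu)$ is literally $(M,\mu)$. For a lift $\wS$, the ring $A(\wS,\widehat m)$ is mapped to $\wS$ by the summation map $\bigoplus_x \wS_x \to \wS$. This map is an isomorphism because $\wS$ is the internal direct sum of its components, it is multiplicative and graded, and it is the identity on $S$.

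The only point requiring care is the identification ``$S$ as a unital subring'' with the literal set $\bigoplus_{g} S_g$ inside $A(M,\mu)$. This holds because the components are equal as sets rather than merely isomorphic, which is exactly why the definitions insist on $M_g = S_g$ and $\varphi_g=\id$. Apart from this, the argument consists of routine verifications.
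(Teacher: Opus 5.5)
Your proposal is correct and follows the same route as the paper: you restrict the factor-system correspondence of Theorem~\ref{thm:factor-correspondence} to lifts and extensions, and you check that graded isomorphisms restricting to $\id_S$ correspond exactly to factor-system isomorphisms with $\varphi_g=\id_{S_g}$, writing out the routine verifications the paper leaves implicit. One small nitpick: the sum $\bigoplus_{g\in G}S_g$ inside $A(M,\mu)$ is identified with $S$ via the canonical summation map rather than being literally equal to it, and the paper makes this identification tacitly as well.
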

\begin{proof}
Every $\wG$-lift of $S$ determines a normalized factor system extending $(S,m)$, and every such extension determines a $\wG$-lift of $S$. 
Under this correspondence, graded isomorphisms restricting to the identity on $S$ correspond precisely to~isomorphisms of factor systems restricting to $\id_{S_g}$ for all $g\in G$. 
Hence the correspondence induces the asserted bijection on equivalence classes.
\end{proof}

The correspondence yields the following necessary condition for the existence of a lift.

\begin{corollary}\label{cor:picard-necessary}
Suppose that $S$ admits a $\wG$-lift.
Then its Picard homomorphism $p: G \to \Pic(R)$ extends to a group homomorphism $\wpic: \wG \to \Pic(R)$.
\end{corollary}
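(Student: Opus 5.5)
The plan is to take the Picard homomorphism of the lift itself as the required extension. Suppose $\wS=\bigoplus_{x\in\wG}\wS_x$ is a $\wG$-lift of $S$. Since $\wS$ is strongly $\wG$-graded with principal component $\wS_e=R$, Lemma~\ref{lem:mult-iso} shows that each $\wS_x$ is an invertible $R$-bimodule. Moreover, multiplication gives $R$-bimodule isomorphisms $\wS_x\otimes_R\wS_y\cong\wS_{xy}$. Hence, as recorded in~\eqref{eq:picard-map},
\[
    \wpic:\wG\to\Pic(R),\qquad x\mapsto[\wS_x],
\]
is a group homomorphism.

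It remains to check that $\wpic$ restricts to $p$ on $G$. For $g\in G$, the definition of a lift gives $\wS_g=S_g$ as subsets of $\wS$. The $R$-bimodule structures on both sides are given by the multiplication of $\wS$ and of $S$ respectively, and these agree because $S$ is a unital subring of $\wS$ with $\wS_e=R=S_e$. Therefore
\[
    \wpic(g)=[\wS_g]=[S_g]=p(g)\qquad\text{for all } g\in G,
\]
so $\wpic|_G=p$.

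Equivalently, one can argue through Theorem~\ref{thm:lift-factor-correspondence}. The lift corresponds to a normalized factor system $(M,\mu)$ for $(\wG,R)$ extending $(S,m)$. Its isomorphisms $\mu_{x,y}$ show that $x\mapsto[M_x]$ is a homomorphism, and the condition $M_g=S_g$ gives the restriction property.

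The argument has no real obstacle. The only point needing care is that the bimodule structure on $\wS_g$ coming from $\wS$ coincides with the one on $S_g$ coming from $S$, so that the two Picard classes are literally equal. This follows from the unital subring hypothesis together with $\wS_e=R$.
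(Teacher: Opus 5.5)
Your proposal is correct and is essentially the paper's argument: the paper derives the corollary directly from the correspondence in Theorem~\ref{thm:lift-factor-correspondence}, taking the lift's own Picard homomorphism $x\mapsto[\wS_x]$, which restricts to $p$ because $\wS_g=S_g$ for $g\in G$. You also spell out why the two $R$-bimodule structures on $\wS_g=S_g$ agree (unital subring, $\wS_e=R$), a point the paper leaves implicit.
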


The converse need not hold, since an extension of $p$ determines only the isomorphism classes of the homogeneous components, not compatible multiplication maps extending those of $S$.

\begin{remark}\label{rem:lift-by-zero}
A lift cannot be obtained by setting $\wS_x=0$ for $x\notin G$. 
Indeed, strongness would require $\wS_x \wS_{x^{-1}} = \wS_e = R$ for all $x \in \wG$. 
Thus, when $R \neq 0$, every homogeneous component must be nonzero. 
In fact, by Lemma~\ref{lem:mult-iso}, the additional components must be invertible $R$-bimodules.
\end{remark}

\subsection{The associativity obstruction}

For the remainder of this section, fix a group homomorphism $\wpic: \wG \to \Pic(R)$ extending the Picard homomorphism $p: G \to \Pic(R)$ of $S$.
For each $x \in \wG$, choose an invertible $R$-bimodule $M_x$ representing $\wpic(x)$, with $M_g = S_g$ for all $g\in G$. 
In particular, $M_e = R$.

Since $\wpic$ is a homomorphism,
\[
    [M_x \otimes_R M_y]
    =
    \wpic(x) \wpic(y)
    =
    \wpic(xy)
    =
    [M_{xy}]
\]
for all $x,y \in \wG$. 
Hence there exist $R$-bimodule isomorphisms
\[
    \mu_{x,y}: M_x \otimes_R M_y \to M_{xy},
    \qquad
    x,y \in \wG.
\]
Choose these isomorphisms subject to $\mu_{g,h} = m_{g,h}$ for all $g,h \in G$, and require $\mu_{e,x}$ and $\mu_{x,e}$ to be the canonical module actions for each $x \in \wG$. 
Such a choice is possible: the two requirements agree on their overlap because $(S,m)$ is normalized, and for all remaining pairs the required isomorphisms exist by the preceding argument.
The resulting maps extend the multiplication maps of $S$, but need not satisfy the associativity condition~\eqref{eq:associativity}.

For $x,y,z \in \wG$, the two composites from $M_x\otimes_R M_y\otimes_R M_z$ to $M_{xyz}$ differ by an automorphism of $M_{xyz}$. 
Since every $R$-bimodule automorphism of an invertible $R$-bimodule is given by left multiplication by a unique element of $\U(Z(R))$ (see, e.g.,~\cite[Lem.~I.3.11.2]{NuOy04}), there exists a unique element
\[
    \alpha(x,y,z) \in \U(Z(R))
\]
such that
\begin{equation}\label{eq:associator}
    \mu_{xy,z} \circ (\mu_{x,y} \otimes \id_{M_z})
    =
    \alpha(x,y,z) \mu_{x,yz} \circ (\id_{M_x} \otimes \mu_{y,z}).
\end{equation}

The normalization of the maps $\mu_{x,y}$ implies that $\alpha$ is normalized. 
Moreover, since $\mu_{g,h} = m_{g,h}$ for all $g,h \in G$ and multiplication in $S$ is associative, $\alpha\vert_{G^3} = 1$. 
The natural receptacle for this obstruction is therefore relative group cohomology; see \cite[Chap.~XI, Sec.~9]{MacLane95} or~\cite{Tak59}.

As in Section~\ref{sec:prelim}, the homomorphism $\wpic: \wG \to \Pic(R)$ makes $\U(Z(R))$ a $\wG$-module through the Fröhlich map. 
Let
\[
    C^n_{\wpic}(\wG,\U(Z(R)))
\]
denote the group of normalized $n$-cochains for this action. 
Restriction to $G$ defines a morphism of cochain complexes
\[
    \operatorname{res}_G^{\wG}:
    C^\bullet_{\wpic}(\wG,\U(Z(R)))
    \to
    C^\bullet_p(G,\U(Z(R))).
\]
Since restriction commutes with the group-cohomology differential, its kernel is a subcomplex. 
The relative cochain complex is defined by
\[
    C^\bullet_{\wpic}(\wG,G;\U(Z(R)))
    :=
    \ker(\operatorname{res}_G^{\wG}).
\]
Thus, for $n\geq 1$,
\[
    C^n_{\wpic}(\wG,G;\U(Z(R)))
    =
    \{c\in C^n_{\wpic}(\wG,\U(Z(R))): c\vert_{G^n} = 1\}.
\]
Its cohomology is denoted by
\[
    H^n_{\wpic}(\wG,G;\U(Z(R))),
    \qquad
    n \geq 1.
\]

\begin{lemma}\label{lem:associator-cocycle}
The map $\alpha$ defined by \eqref{eq:associator} is a normalized relative $3$-cocycle. 
Its cohomology class
\[
    [\alpha]\in H^3_{\wpic}(\wG,G;\U(Z(R)))
\]
is independent of the choices of $M_x$, $x \in \wG$, and $\mu_{x,y}$, $x,y \in \wG$.
\end{lemma}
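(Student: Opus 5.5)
We plan to follow the standard argument behind Theorem~\ref{thm:factor-system-cohomology}, checking at each step that the extra relative conditions are preserved. The proof has three steps: the cocycle identity, relativity and normalization, and independence of choices.

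\emph{Cocycle identity.} Consider the five ways of bracketing $M_x\otimes_R M_y\otimes_R M_z\otimes_R M_w \to M_{xyzw}$, arranged as the vertices of the pentagon. Each edge changes one bracketing and, by \eqref{eq:associator}, contributes one value of $\alpha$.
\begin{itemize}
\item An edge that rebrackets the last three factors inside $\mu_{x,yzw}\circ(\id\otimes\,\cdot\,)$ contributes $\alpha(y,z,w)$.
\item Moving a central scalar $u$ past $m_x\in M_x$ turns it into $\Delta_{M_x}(u)={}^xu$, since $m u=\Delta_{M_x}(u)m$. So this edge contributes ${}^x\alpha(y,z,w)$.
\end{itemize}
Comparing the two paths around the pentagon gives
\[
{}^x\alpha(y,z,w)\,\alpha(x,yz,w)\,\alpha(x,y,z)=\alpha(xy,z,w)\,\alpha(x,y,zw),
\]
which is the $3$-cocycle condition for the action \eqref{eq:froehlich}. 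We also need that central scalars commute with the $\mu$'s. This holds because each $\mu_{x,y}$ is a bimodule map, so $\mu(u\,a\otimes b)=u\,\mu(a\otimes b)$.

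\emph{Normalization and relativity.} Normalization comes from the normalization of the $\mu$'s, as already noted. The condition $\alpha|_{G^3}=1$ was observed before the lemma. Hence $\alpha\in C^3_{\wpic}(\wG,G;\U(Z(R)))$ and it is a cocycle there.

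\emph{Independence of choices.} We treat the choice of the $\mu$'s first and then the choice of the modules.

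\textbf{Other $\mu$'s.} Fix the modules $M_x$ and let $\mu'$ be another admissible choice. Each $\mu'_{x,y}\circ\mu_{x,y}^{-1}$ is an automorphism of the invertible bimodule $M_{xy}$. By the cited lemma it equals multiplication by a unique $\beta(x,y)\in\U(Z(R))$, so $\mu'_{x,y}=\beta(x,y)\mu_{x,y}$. Because both choices are normalized, $\beta$ is normalized. Because both restrict to $m$ on $G$, $\beta|_{G^2}=1$. Thus $\beta\in C^2_{\wpic}(\wG,G;\U(Z(R)))$. Substituting into \eqref{eq:associator} and again moving scalars across $M_x$ via the Fröhlich action gives $\alpha'=\alpha\cdot\delta\beta$ (or its inverse, depending on convention).

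\textbf{Other modules.} Let $M'_x$ be another choice of representatives, with $M'_g=S_g$. Choose bimodule isomorphisms $\varphi_x\colon M_x\to M'_x$ with $\varphi_g=\id_{S_g}$ for $g\in G$; in particular $\varphi_e=\id_R$. Transport the structure by setting
\[
\mu''_{x,y}:=\varphi_{xy}\circ\mu_{x,y}\circ(\varphi_x^{-1}\otimes\varphi_y^{-1}).
\]
This is an admissible choice for the $M'$. It is normalized, and on $G$ it equals $m$ because $\varphi$ is the identity there. Its associator is exactly $\alpha$, since $\varphi_{xyz}$ intertwines the two composites and commutes with central scalars. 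Any other admissible choice on the $M'$ then gives a cohomologous cocycle by the previous step.

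The main obstacle is the bookkeeping in the pentagon and coboundary computations. One must get the twisting by ${}^x(\cdot)$ in the right place, which requires carefully tracking where central scalars sit relative to tensor factors. Everything else is routine once one sees that all modifications are identities on $G$, so they stay inside the relative complex.
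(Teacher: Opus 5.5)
Your proposal is correct and follows the paper's proof essentially step by step: the pentagon comparison with the Fröhlich twist ${}^x\alpha(y,z,t)$, then independence of the $\mu$'s via a normalized relative $2$-cochain, then independence of the modules by transport of structure. The only cosmetic difference is the direction of transport. You push $\mu$ forward to the new representatives $M'_x$, whereas the paper pulls $\nu$ back to $M_x$; both routes reduce to the fixed-module case in the same way.
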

\begin{proof}
For $x,y,z,t \in \wG$, let
\[
\begin{aligned}
    L_{x,y,z,t}
    &:=
    \mu_{xyz,t}
    \circ
    (\mu_{xy,z} \otimes \id_{M_t})
    \circ
    (\mu_{x,y} \otimes \id_{M_z} \otimes \id_{M_t}),
    \\
    R_{x,y,z,t}
    &:=
    \mu_{x,yzt}
    \circ
    (\id_{M_x} \otimes \mu_{y,zt})
    \circ
    (\id_{M_x} \otimes \id_{M_y} \otimes \mu_{z,t}).
\end{aligned}
\]
Repeated application of \eqref{eq:associator} along one side of the associativity pentagon gives
\[
    L_{x,y,z,t}
    =
    {}^x\alpha(y,z,t)
    \alpha(x,yz,t)
    \alpha(x,y,z)
    R_{x,y,z,t}.
\]
Here the term ${}^x\alpha(y,z,t)$ arises from moving $\alpha(y,z,t)$ past the factor $M_x$ using \eqref{eq:froehlich}.
Applying \eqref{eq:associator} along the other side gives
\[
    L_{x,y,z,t}
    =
    \alpha(xy,z,t)
    \alpha(x,y,zt)
    R_{x,y,z,t}.
\]
Since $R_{x,y,z,t}$ is an isomorphism, it follows that
\[
    {}^x\alpha(y,z,t)
    \alpha(x,yz,t)
    \alpha(x,y,z)
    =
    \alpha(xy,z,t)
    \alpha(x,y,zt)
\]
for all $x,y,z,t \in \wG$. 
This is precisely the cocycle identity $d\alpha=1$.
The normalization of the maps $\mu_{x,y}$ implies that $\alpha$ is normalized. 
Moreover, $\alpha\vert_{G^3}=1$, so $\alpha$ is a normalized relative $3$-cocycle.

It remains to prove independence of the choices. 
First suppose that the bimodules $M_x$, $x \in \wG$, are fixed, and let
$(\mu'_{x,y})_{x,y \in \wG}$ be another admissible family of isomorphisms. 
For all $x,y \in \wG$, there is a unique $c(x,y) \in \U(Z(R))$ such that
\[
    \mu'_{x,y} = c(x,y) \mu_{x,y}.
\]
Since both families are normalized and agree with $m_{g,h}$ on $G \times G$, the map
\[
    c: \wG \times \wG \to \U(Z(R))
\]
is a normalized relative $2$-cochain.

Let $\alpha'$ be the associator determined by $\mu'$. 
Substitution into~\eqref{eq:associator} gives
\[
    \alpha'(x,y,z)
    =
    ({}^xc(y,z))^{-1}
    c(x,yz)^{-1}
    c(xy,z)
    c(x,y)
    \alpha(x,y,z)
\]
for all $x,y,z \in \wG$. 
Equivalently, $\alpha' = (dc)^{-1} \alpha$.
Thus $\alpha$ and $\alpha'$ determine the same class in $H^3_{\wpic}(\wG,G;\U(Z(R)))$.

Finally, suppose that another family of representatives $(M'_x)_{x\in\wG}$ is chosen, together with multiplication maps $\nu_{x,y}$, $x,y \in \wG$. 
Choose $R$-bimodule isomorphisms
\[
    \varphi_x: M_x \to M'_x,
    \qquad
    x \in \wG,
\]
with $\varphi_g = \id_{S_g}$ for all $g\in G$. 
Transport the maps $\nu_{x,y}$ to the bimodules $M_x$ by setting
\[
    \widetilde\mu_{x,y}
    :=
    \varphi_{xy}^{-1} \circ \nu_{x,y} \circ (\varphi_x \otimes \varphi_y).
\]
The associator is unchanged under this transport, and the comparison between $\widetilde\mu$ and $\mu$ reduces to the preceding case. 
Hence the class $[\alpha]$ is independent of all choices.
\end{proof}

The class
\[
    \kappa_S(\wpic)
    :=
    [\alpha]
    \in
    H^3_{\wpic}(\wG,G;\U(Z(R)))
\]
is called the \emph{characteristic class} of $\wpic$ with respect to $(S,m)$.

\begin{theorem}\label{thm:obstruction}
There exists a $\wG$-lift of $S$ with Picard homomorphism $\wpic$ if and only if $\kappa_S(\wpic) = 0$.
\end{theorem}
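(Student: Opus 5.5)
By \cref{thm:lift-factor-correspondence}, it suffices to show that a normalized factor system $(M,\mu)$ for $(\wG,R)$ extending $(S,m)$ with $[M_x]=\wpic(x)$ for all $x\in\wG$ exists if and only if $\kappa_S(\wpic)=0$. The plan is to prove each direction directly from \cref{lem:associator-cocycle}, using the freedom in choosing the maps $\mu_{x,y}$.

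\emph{Necessity.} Suppose $\wS$ is a $\wG$-lift of $S$ with Picard homomorphism $\wpic$. Take $M_x:=\wS_x$ and let $\mu_{x,y}$ be the multiplication maps of $\wS$, which are isomorphisms by Lemma~\ref{lem:mult-iso}. These are admissible choices:
\begin{itemize}
\item $M_g=S_g$ and $\mu_{g,h}=m_{g,h}$ for all $g,h\in G$, because $S$ is a unital subring with $\wS_g=S_g$;
\item $\mu_{e,x}$ and $\mu_{x,e}$ are the module actions, because $\wS_e=R$ and the unit is shared.
\end{itemize}
Since multiplication in $\wS$ is associative, the associator is $\alpha\equiv 1$. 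By the independence statement in \cref{lem:associator-cocycle}, this gives $\kappa_S(\wpic)=[1]=0$.

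\emph{Sufficiency.} Fix admissible choices $(M_x,\mu_{x,y})$ with associator $\alpha$, and suppose $[\alpha]=0$ in relative cohomology. Then there is a normalized relative $2$-cochain $c$, meaning $c|_{G^2}=1$, with $\alpha=dc$. Define
\[
\mu'_{x,y}:=c(x,y)\,\mu_{x,y}.
\]
This family is again admissible:
\begin{itemize}
\item $\mu'$ is normalized because $c$ is normalized;
\item $\mu'_{g,h}=m_{g,h}$ for $g,h\in G$ because $c|_{G^2}=1$;
\item each $\mu'_{x,y}$ is a bimodule isomorphism because $c(x,y)\in\U(Z(R))$ is central.
\end{itemize}
The transformation formula from the proof of \cref{lem:associator-cocycle} gives $\alpha'=(dc)^{-1}\alpha=1$, so \eqref{eq:associativity} holds for $\mu'$. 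Hence $(M,\mu')$ is a normalized factor system extending $(S,m)$ and realizing $\wpic$. The ring $A(M,\mu')$ is then strongly $\wG$-graded and contains $S$ as a unital subring with matching $G$-components, so it is the required lift.

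The main point needing care is the sign and normalization bookkeeping. The cochain $c$ must be relative as well as normalized, so that the corrected maps still restrict to $m$ on $G$ and remain the canonical actions at $e$. I would handle this by verifying that the relative cochain complex is defined by normalized cochains, which the paper's definition does. Either choosing $c$ or $c^{-1}$ gives the cancellation, depending on the sign convention in the formula $\alpha'=(dc)^{-1}\alpha$.
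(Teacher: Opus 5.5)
Your proposal is correct and follows the paper's proof essentially verbatim. For necessity, you take the lift's own components and multiplication maps, so $\alpha\equiv 1$, and then use the choice-independence in Lemma~\ref{lem:associator-cocycle}. For sufficiency, you twist $\mu$ by a normalized relative $2$-cochain $c$ with $\alpha=dc$, so that $\alpha'=(dc)^{-1}\alpha=1$, and conclude via Theorem~\ref{thm:lift-factor-correspondence}. Your explicit admissibility checks only spell out what the paper leaves implicit.
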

\begin{proof}
If such a lift exists, its associative multiplication maps give $\alpha = 1$, and hence $\kappa_S(\wpic)=0$.

Conversely, suppose that $\kappa_S(\wpic)=0$. 
Then $\alpha = dc$ for some normalized relative $2$-cochain $c$. 
Define
\[
    \mu'_{x,y} := c(x,y)\mu_{x,y},
    \qquad
    x,y \in \wG.
\]
By the transformation formula in the proof of Lemma~\ref{lem:associator-cocycle}, the corresponding associator is $\alpha' = (dc)^{-1}\alpha = 1$.
Moreover, since $c\vert_{G^2} = 1$, the maps $\mu'_{g,h}$ agree with $m_{g,h}$ for all $g,h\in G$. 
Thus $(M,\mu')$ is a normalized factor system extending $(S,m)$, and Theorem~\ref{thm:lift-factor-correspondence} yields the required lift.
\end{proof}

\subsection{Classification for fixed Picard data}

Assume that $\kappa_S(\wpic) = 0$ and fix a normalized factor system $(M,\mu)$ extending $(S,m)$ and realizing $\wpic$. 
For
\[
    c \in Z^2_{\wpic}(\wG,G;\U(Z(R))),
\]
define
\begin{equation}\label{eq:twist}
    \mu^c_{x,y} := c(x,y) \mu_{x,y},
    \qquad
    x,y \in \wG.
\end{equation}
Since $dc = 1$, the maps $\mu^c_{x,y}$ are associative, and since $c\vert_{G^2} = 1$, they still extend the multiplication maps of $S$.

\begin{theorem}\label{thm:classification}
The group
\[
    H^2_{\wpic}(\wG,G;\U(Z(R)))
\]
acts simply transitively on the equivalence classes of $\wG$-lifts of $S$ with Picard homomorphism $\wpic$.
\end{theorem}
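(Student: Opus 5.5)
By Theorem~\ref{thm:lift-factor-correspondence}, it suffices to prove the statement for equivalence classes of normalized factor systems extending $(S,m)$ and realizing $\wpic$. On these we define the action by twisting, $[c]\cdot[(N,\nu)] := [(N,\nu^c)]$ with $\nu^c$ as in \eqref{eq:twist}. The proof then has four steps: the action is well defined, it is an action, it is transitive, and it is free.

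\textbf{Well-definedness.} First, $(N,\nu^c)$ is again an extension realizing $\wpic$. Next, the class must depend only on $[c]$ and on the equivalence class of $(N,\nu)$.

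If $c' = c\,db$ for a normalized relative $1$-cochain $b$ (so $b\vert_G=1$), set $\varphi_x := b(x)^{\pm1}\cdot\id_{N_x}$. Using the Fröhlich relation \eqref{eq:froehlich} to move $b(y)$ across $N_x$, one checks that $\varphi_{xy}\circ\nu^{c}_{x,y}=\nu^{c'}_{x,y}\circ(\varphi_x\otimes\varphi_y)$; this fixes the sign convention for $d$ in degree one. Since $b\vert_G = 1$, we have $\varphi_g=\id_{S_g}$, so this is an equivalence.

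If $\varphi:(N,\nu)\to(N',\nu')$ is an equivalence, then the same $\varphi$ intertwines $\nu^c$ and $\nu'^c$. The reason is that bimodule maps commute with multiplication by central elements, and $c(x,y)$ acts on $N_{xy}$ and $N'_{xy}$ in the same way. The action axioms $\nu^{1}=\nu$ and $(\nu^{c})^{c'}=\nu^{cc'}$ are immediate.

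\textbf{Transitivity.} Let $(N,\nu)$ be another extension realizing $\wpic$. Since $[N_x]=[M_x]$, choose bimodule isomorphisms $\varphi_x:N_x\to M_x$ with $\varphi_g=\id_{S_g}$ for $g \in G$ and $\varphi_e=\id_R$. Transport $\nu$ to $\widetilde\nu_{x,y}:=\varphi_{xy}\circ\nu_{x,y}\circ(\varphi_x^{-1}\otimes\varphi_y^{-1})$, which gives an extension on the modules $M_x$ equivalent to $(N,\nu)$. As in the proof of Lemma~\ref{lem:associator-cocycle}, write $\widetilde\nu_{x,y}=c(x,y)\mu_{x,y}$ with $c(x,y) \in \U(Z(R))$. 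Then $c$ is a normalized relative $2$-cochain, and since both $\widetilde\nu$ and $\mu$ are associative, the transformation formula gives $1=(dc)^{-1}\cdot 1$. Hence $c\in Z^2_{\wpic}(\wG,G;\U(Z(R)))$ and $[(N,\nu)]=[c]\cdot[(M,\mu)]$.

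\textbf{Freeness.} This is the main point. Suppose $(M,\mu^c)$ is equivalent to $(M,\mu)$ via $\varphi$ with $\varphi_g=\id_{S_g}$. Each $\varphi_x$ is a bimodule automorphism of the invertible bimodule $M_x$, so $\varphi_x = b(x)\cdot\id$ for a unique $b(x)\in\U(Z(R))$. Here $b(e)=1$ and $b\vert_G=1$, so $b$ is a normalized relative $1$-cochain. The compatibility $\varphi_{xy}\circ\mu^c_{x,y}=\mu_{x,y}\circ(\varphi_x\otimes\varphi_y)$ becomes $b(xy)c(x,y)=b(x)\,{}^{x}b(y)$, again using \eqref{eq:froehlich} to pass $b(y)$ across $M_x$. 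Thus $c=db^{\pm1}$ is a relative coboundary and $[c]=0$.

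The only delicate bookkeeping is the consistent use of the Fröhlich action and the sign convention for $d$ in degree one. The essential input is that $b$ must lie in the relative complex, which is exactly what the requirement $\varphi_g=\id_{S_g}$ in the notion of equivalence provides. Combining the three steps, the action of $H^2_{\wpic}(\wG,G;\U(Z(R)))$ is simply transitive.
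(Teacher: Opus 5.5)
Your proof is correct and follows essentially the same route as the paper. You reduce to factor systems via Theorem~\ref{thm:lift-factor-correspondence}, act by twisting $\nu\mapsto c\nu$, get transitivity by transporting $\nu$ onto the $M_x$ and reading off a relative cocycle $c$, and get freeness by writing an equivalence as $\varphi_x=b(x)\id_{M_x}$ with $b$ a relative $1$-cochain. You also check that twisting is compatible with equivalences of factor systems, so the action is well defined on classes, which the paper leaves implicit.
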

\begin{proof}
By Theorem~\ref{thm:lift-factor-correspondence}, it suffices to consider equivalence classes of normalized factor systems extending $(S,m)$.
Twisting as in~\eqref{eq:twist} defines an action of relative $2$-cocycles on these factor systems. 
If two cocycles differ by a relative $2$-coboundary, multiplication by the corresponding relative $1$-cochain defines an equivalence between the resulting factor systems.
Hence the action descends to
\[
    H^2_{\wpic}(\wG,G;\U(Z(R))).
\]

To prove transitivity, let $(M,\mu)$ and $(N,\nu)$ be normalized factor systems extending $(S,m)$ and realizing $\wpic$. 
Choose $R$-bimodule isomorphisms
\[
    \varphi_x: M_x \to N_x,
    \qquad
    x \in \wG,
\]
with $\varphi_g = \id_{S_g}$ for all $g \in G$. 
Transport the maps $\nu_{x,y}$, $x,y \in \wG$ to the bimodules $M_x$ by setting
\[
    \widetilde\nu_{x,y}
    :=
    \varphi_{xy}^{-1} \circ \nu_{x,y} \circ(\varphi_x \otimes \varphi_y)
    :
    M_x \otimes_R M_y \to M_{xy}.
\]
There is a unique normalized relative $2$-cochain $c$ such that $\widetilde\nu_{x,y} = c(x,y)\mu_{x,y}$ for all $x,y \in \wG$.
Since both $\mu$ and $\widetilde\nu$ are associative, $dc = 1$. 
Hence $(M,\widetilde\nu) = (M,\mu^c)$, while $\varphi$ defines an isomorphism from $(M,\widetilde\nu)$ to $(N,\nu)$. 
Thus the action is transitive.

To prove freeness, suppose that $(M,\mu^c)$ is equivalent to $(M,\mu)$. 
The components of such an equivalence have the form
\[
    \varphi_x = b(x)\id_{M_x},
    \qquad
    x \in \wG,
\]
where $b$ is a normalized relative $1$-cochain. 
Compatibility with the multiplication maps gives
\[
    c(x,y)
    =
    b(x) {}^xb(y) b(xy)^{-1}
    =
    (db)(x,y)
\]
for all $x,y \in \wG$. 
Thus $c$ is a relative coboundary, and the action is free.
\end{proof}

Combining \cref{thm:obstruction,thm:classification} gives a complete cohomological solution to the lifting problem posed in the introduction:
first extend the Picard homomorphism $p$ of $S$ to a homomorphism $\wpic: \wG \to \Pic(R)$, then test the characteristic class
\[
    \kappa_S(\wpic)
    \in
    H^3_{\wpic}(\wG,G;\U(Z(R))),
\]
and, when it vanishes, classify the lifts using
\[
    H^2_{\wpic}(\wG,G;\U(Z(R))).
\]

\section{Strong gradings and normal subgroups: a recognition criterion}
\label{sec:normal-subgroups}

The preceding section addresses the existence and classification of lifts.
We now turn to the complementary problem of determining when a given graded extension is strong. 
For normal subgroups, this can be checked in two stages: first on the subgroup and then on the quotient.

Let $H$ be a group with identity element $e$, let $A = \bigoplus_{h\in H} A_h$ be an $H$-graded ring, and let $N \trianglelefteq H$. 
There are two naturally associated gradings. 
The \emph{subgroup grading} is the $N$-graded subring
\[
    A_N := \bigoplus_{n\in N} A_n.
\]
The \emph{quotient grading} is the $H/N$-grading of $A$ with homogeneous components
\[
    A_{[x]} := \bigoplus_{n \in N} A_{xn},
    \qquad
    [x] \in H/N.
\]
The normality of $N$ ensures that $A_{[x]} A_{[y]} \subseteq A_{[xy]}$ for all $x,y \in H$.

If the original $H$-grading is strong, then both associated gradings are strong; see, for example,~\cite[Prop.~2.1]{Lann21}. 
The converse also holds.
After the first version of this paper appeared on arXiv, Roozbeh Hazrat kindly pointed out to us that this result had already been proved in the more general setting of graded rings with graded local units in \cite[Lem.~2.4]{ClHaRi19}. 
We were not aware of this reference when the first version was posted. 
The same result was obtained independently by the first author in her master's thesis~\cite[Thm.~6]{Hus26} for the unital setting considered here. 
For completeness, we include a proof adapted to our notation.

\pagebreak[3]
\begin{theorem}\label{thm:normal-permanence}
The following are equivalent:
\begin{enumerate}[label=(\alph*)]
    \item the $H$-grading of $A$ is strong;
    \item the subgroup grading of $A_N$ is strongly $N$-graded and the quotient grading is strongly $H/N$-graded.
\end{enumerate}
\end{theorem}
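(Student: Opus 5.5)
The direction (a)$\Rightarrow$(b) is the cited fact \cite[Prop.~2.1]{Lann21}, but it is quick to verify directly. For $n,n'\in N$ we have $A_nA_{n'}=A_{nn'}$ by strongness, so $A_N$ is strongly $N$-graded. For the quotient grading, $A_{[x]}A_{[y]}\supseteq \sum_{n}A_{x}A_{yn}=\sum_n A_{xyn}=A_{[xy]}$, and the reverse inclusion is automatic. So the work lies in (b)$\Rightarrow$(a).

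For (b)$\Rightarrow$(a), I would use the standard unit criterion: an $H$-grading on a unital ring is strong if and only if $1\in A_hA_{h^{-1}}$ for every $h\in H$. Indeed, given this, $A_{hk}=1\cdot A_{hk}\subseteq A_hA_{h^{-1}}A_{hk}\subseteq A_hA_k$. Fix $x\in H$. By strongness of the quotient grading, $1\in A_e=A_{[e]}$ lies in $A_{[x]}A_{[x^{-1}]}$. Write $1=\sum_i a_ib_i$ with $a_i\in A_{[x]}$ and $b_i\in A_{[x^{-1}]}$. Decompose each factor homogeneously as $a_i=\sum_{n}a_{i,xn}$ and $b_i=\sum_{n'}b_{i,n'x^{-1}}$; here I use $[x^{-1}]=Nx^{-1}=x^{-1}N$, by normality. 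Comparing the $e$-components of $1=\sum_i a_ib_i$ gives
\[
1=\sum_i\sum_{n\in N} a_{i,xn}\,b_{i,n^{-1}x^{-1}},
\]
since $xn\cdot n'x^{-1}=e$ forces $n'=n^{-1}$. Hence $1\in\sum_{n\in N}A_{xn}A_{n^{-1}x^{-1}}$.

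Next I bring in the strongness of $A_N$. For each $n\in N$ we have $1\in A_{n}A_{n^{-1}}$, so
\[
A_{xn}A_{n^{-1}x^{-1}}=A_{xn}\cdot 1\cdot A_{n^{-1}x^{-1}}\subseteq A_{xn}A_{n^{-1}}A_nA_{n^{-1}x^{-1}}\subseteq A_xA_{x^{-1}}.
\]
Summing over $n$ yields $1\in A_xA_{x^{-1}}$, and the criterion completes the proof.

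The main obstacle is the component-extraction step. It requires choosing the coset representatives consistently, which is where normality is used to write $x^{-1}N=Nx^{-1}$, and checking that only the pairs $(xn,n^{-1}x^{-1})$ contribute to the identity component. The remaining steps are routine inclusions.
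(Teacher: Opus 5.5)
Your proposal is correct and essentially matches the paper's argument: you project a product of quotient components onto a single degree, use normality to pair $A_{xn}$ with $A_{n^{-1}(\cdot)}$, and absorb the $N$-twist using strongness of $A_N$. Your display actually inserts $1\in A_{n^{-1}}A_n$ rather than $1\in A_nA_{n^{-1}}$, but that is the same fact applied to $n^{-1}$. The only differences are that you project $1$ onto degree $e$ and then invoke the unit criterion $1\in A_xA_{x^{-1}}$, whereas the paper projects onto degree $xy$ to get $A_{xy}\subseteq A_xA_y$ directly, and that you verify (a)$\Rightarrow$(b) yourself rather than citing it.
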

\begin{proof}
We only prove \textup{(b)}$\Rightarrow$\textup{(a)}, since \textup{(a)}$\Rightarrow$\textup{(b)} was recalled above. 
Assume that the subgroup and quotient gradings are strong.
Fix $x,y \in H$. 
It suffices to establish that $A_{xy} \subseteq A_xA_y$, because the reverse inclusion follows from the grading. 
By the strongness of the quotient grading,
\[
    A_{xy} \subseteq A_{[xy]}=A_{[x]}A_{[y]}.
\]
Expanding the quotient components and projecting onto $A_{xy}$ gives
\[
    A_{xy}
    \subseteq
    \sum_{\substack{n,m \in N\\xnym=xy}}
    A_{xn}A_{ym}.
\]
For each $n \in N$, the condition $xnym = xy$ determines $m = y^{-1}n^{-1}y \in N$, where normality of $N$ is used. 
Since $ym = n^{-1}y$, it follows that
\[
    A_{xy}
    \subseteq
    \sum_{n \in N} A_{xn} A_{n^{-1}y}.
\]

Since the subgroup grading is strong, $A_n A_{n^{-1}} = A_e = A_{n^{-1}} A_n$.
It follows that $A_{xn} = A_xA_n$ and $A_{n^{-1}y} = A_{n^{-1}}A_y$.
Indeed,
\[
    A_{xn}
    =
    A_{xn}A_e
    =
    A_{xn}A_{n^{-1}}A_n
    \subseteq
    A_xA_n,
\]
and the reverse inclusion follows from the grading; the second equality is proved similarly. 
Consequently,
\[
    A_{xn}A_{n^{-1}y}
    =
    A_xA_nA_{n^{-1}}A_y
    =
    A_xA_y.
\]
Thus every contribution of degree $xy$ to $A_{[x]}A_{[y]}$ belongs to $A_xA_y$, proving that $A_{xy}\subseteq A_xA_y$. 
Hence the original $H$-grading is strong.
\end{proof}

For our lifting problem, the theorem gives the following recognition criterion.

\begin{corollary}\label{cor:normal-lift}
Assume that $G\trianglelefteq\wG$, and let $\wS = \bigoplus_{x \in \wG} \wS_x$ be a $\wG$-graded ring whose $G$-graded subring is the given strongly $G$-graded ring $S$. 
Then $\wS$ is a $\wG$-lift of $S$ if and only if its quotient grading over $\wG/G$ is strong.
\end{corollary}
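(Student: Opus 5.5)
The plan is to apply Theorem~\ref{thm:normal-permanence} with $H = \wG$, $A = \wS$ and $N = G$, after unpacking what it means for $\wS$ to be a $\wG$-lift of $S$. By definition, a $\wG$-lift is a strongly $\wG$-graded unital ring containing $S$ as a unital subring with $\wS_g = S_g$ for all $g \in G$. The hypotheses already give a $\wG$-graded ring whose $G$-graded subring $\wS_G = \bigoplus_{g\in G}\wS_g$ equals $S$. Implicitly, $S \subseteq \wS$ is a unital subring: the identity of $\wS$ lies in $\wS_e = S_e = R$, so it is the identity of $S$. Being a lift therefore reduces to the single condition that the $\wG$-grading of $\wS$ is strong.

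For the forward direction, assume $\wS$ is a lift. Then the $\wG$-grading is strong, and the implication (a)$\Rightarrow$(b) of Theorem~\ref{thm:normal-permanence} shows that the quotient grading over $\wG/G$ is strong. Normality of $G$ is what makes this quotient grading well defined.

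For the converse, assume the quotient grading over $\wG/G$ is strong. The subgroup grading $\wS_G = S$ is strongly $G$-graded by the standing assumption on $S$. Thus condition (b) of Theorem~\ref{thm:normal-permanence} holds, and (b)$\Rightarrow$(a) gives that the $\wG$-grading of $\wS$ is strong. Combined with the identifications above, this makes $\wS$ a $\wG$-lift of $S$.

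There is no real obstacle here. The only point needing care is the bookkeeping that the subgroup grading of $\wS$ over $G$ coincides, as a graded ring, with the given $S$, and that the units agree so the subring is unital. Both follow directly from the hypothesis and the fact that the identity lies in the principal component.
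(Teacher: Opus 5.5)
Your proposal is correct and matches the paper's proof: identify the subgroup grading of $\wS$ over $G$ with the given strongly graded ring $S$, then apply Theorem~\ref{thm:normal-permanence} with $H=\wG$ and $N=G$. Your remark that $1_{\wS}\in\wS_e=R$, so that $S$ is automatically a unital subring, is bookkeeping the paper leaves implicit.
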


\begin{proof}
The subgroup grading is the given strongly $G$-graded ring $S$. The
claim therefore follows from \cref{thm:normal-permanence} with
$H=\wG$ and $N=G$.
\end{proof}

The corollary is useful when a $\wG$-graded extension of $S$ arises from an independent construction, for example from generators and relations, a crossed product, or an ambient graded ring. 
It does not construct such an extension; rather, it reduces the verification of strongness to the quotient $\wG/G$-grading. 
Normality also allows the relative cohomology groups from the preceding section to be studied through the Lyndon--Hochschild--Serre spectral sequence, although they do not in general reduce to the cohomology of $\wG/G$.

\section{Examples}\label{sec:examples}

We conclude with examples illustrating the different layers of the lifting problem. 
We begin with cyclic overgroups and then interpret the classical double covering $\mathrm{Spin}(2) \to \mathrm{SO}(2)$ as a lift from a strong $2\mathbb Z$-grading to a strong $\mathbb Z$-grading. 
We next specialize the obstruction theory to crossed products, exhibit a Picard obstruction, and identify the characteristic class of a twisted group ring with the familiar obstruction to extending a $2$-cocycle. 
Finally, we use the normal-subgroup recognition theorem to construct semidirect-product lifts.

\subsection{Cyclic overgroups}

Let $m\geq 2$, and let $S = \bigoplus_{n \in m\mathbb Z} S_n$ be a strongly $m\mathbb Z$-graded ring with $S_0=R$ and Picard homomorphism $p:m\mathbb Z \to \Pic(R)$.
The homomorphism $p$ extends to $\mathbb Z$ if and only if $p(m)$ admits an
$m$th root in $\Pic(R)$. 
Indeed, an extension $\wpic$ is determined by $\wpic(1)$, which must satisfy $\wpic(1)^m = p(m)$.

\begin{lemma}\label{lem:cyclic-lifts}
Suppose that $\wpic: \mathbb Z \to \Pic(R)$ extends $p$. 
Then there exists a $\mathbb Z$-lift of $S$ with Picard homomorphism $\wpic$. 
Its equivalence classes form a torsor under
\[
    \operatorname{coker}
    \!
    \left(
        H^1_{\wpic}(\mathbb Z,\U(Z(R))) \to H^1_p(m\mathbb Z,\U(Z(R)))
    \right).
\]
\end{lemma}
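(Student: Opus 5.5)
The plan is to use the long exact sequence in cohomology for the pair $(\mathbb Z, m\mathbb Z)$ and reduce everything to Theorems~\ref{thm:obstruction} and~\ref{thm:classification}. The relative complex is defined as the kernel of the restriction map $C^\bullet_{\wpic}(\mathbb Z,\U(Z(R))) \to C^\bullet_p(m\mathbb Z,\U(Z(R)))$. I will first check that this restriction is surjective on normalized cochains in each degree $n\geq 1$. This is clear: a normalized cochain on $(m\mathbb Z)^n$ extends to $\mathbb Z^n$ by setting it equal to $1$ outside $(m\mathbb Z)^n$, and the extension remains normalized. We therefore have a short exact sequence of complexes in degrees $\geq 1$. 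In degree $0$ the relative complex is trivial by the convention $n\geq 1$, and I will handle the low end by truncating. This gives a long exact sequence
\[
    H^1_{\wpic}(\mathbb Z) \to H^1_p(m\mathbb Z) \to H^2_{\wpic}(\mathbb Z,m\mathbb Z) \to H^2_{\wpic}(\mathbb Z) \to H^2_p(m\mathbb Z) \to H^3_{\wpic}(\mathbb Z,m\mathbb Z) \to H^3_{\wpic}(\mathbb Z),
\]
with coefficients in $\U(Z(R))$ throughout.

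Next I use that $\mathbb Z$ and $m\mathbb Z\cong\mathbb Z$ are free groups of rank one, so their cohomology vanishes in degrees $\geq 2$ for any module. This computation can be done with normalized cochains, since those compute the usual cohomology. The sequence then yields $H^3_{\wpic}(\mathbb Z,m\mathbb Z;\U(Z(R)))=0$. Hence $\kappa_S(\wpic)=0$, and Theorem~\ref{thm:obstruction} gives existence of a lift.

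The sequence also yields an isomorphism
\[
    H^2_{\wpic}(\mathbb Z,m\mathbb Z;\U(Z(R))) \cong \operatorname{coker}\!\left(H^1_{\wpic}(\mathbb Z,\U(Z(R))) \to H^1_p(m\mathbb Z,\U(Z(R)))\right),
\]
because the connecting map is surjective when $H^2_{\wpic}(\mathbb Z)=0$. Combined with Theorem~\ref{thm:classification}, this gives the torsor statement.

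The main obstacle is making the connecting map and exactness at $H^1_p(m\mathbb Z)$ precise for the relative complex as defined, which has no degree-$0$ term. To exhibit the connecting homomorphism explicitly, I would take a $1$-cocycle $f$ on $m\mathbb Z$, extend it by $1$ to a normalized $1$-cochain $\tilde f$ on $\mathbb Z$, and map $f$ to $[d\tilde f]$. Here $d\tilde f$ restricts to $df=1$ on $(m\mathbb Z)^2$, so it is a relative $2$-cocycle.

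Surjectivity goes as follows. For a relative $2$-cocycle $c$, its class in $H^2(\mathbb Z)=0$ vanishes, so $c=db$ for some $1$-cochain $b$. Then $b|_{m\mathbb Z}$ is a cocycle, and $c$ is the image of $b|_{m\mathbb Z}$ under the connecting map.

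For the kernel, suppose $d\tilde f = da$ with $a$ a relative $1$-cochain. Then $\tilde f a^{-1}$ is a $1$-cocycle on $\mathbb Z$ restricting to $f$, since $a|_{m\mathbb Z}=1$. This shows the kernel is exactly the image of restriction. Degree-$0$ coboundaries are not quotiented in $H^1$ relative, so this direct check is what I would write out rather than citing the general sequence.
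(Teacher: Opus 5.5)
Your proposal is correct and follows the paper's proof. The paper also uses the long exact sequence of the pair $(\mathbb Z,m\mathbb Z)$ together with the vanishing of $H^n$ for $n\geq 2$ for both groups. This kills $H^3_{\wpic}(\mathbb Z,m\mathbb Z;\U(Z(R)))$, identifies $H^2_{\wpic}(\mathbb Z,m\mathbb Z;\U(Z(R)))$ with the cokernel, and then \cref{thm:obstruction,thm:classification} finish the argument. Your explicit low-degree check only fills in details the paper leaves implicit. It is not strictly needed: restriction in degree $0$ is the identity on $\U(Z(R))$, so the short exact sequence of complexes already holds in degree $0$ and the standard long exact sequence applies directly.
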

\begin{proof}
Since both groups have cohomological dimension one, the long exact sequence of the pair shows that the relative third cohomology vanishes and identifies $H^2_{\wpic}(\mathbb Z,m\mathbb Z;\U(Z(R)))$ with the indicated cokernel.
The claim now follows from~\cref{thm:obstruction,thm:classification}.
\end{proof}

If the action on $\U(Z(R))$ induced by $\wpic$ is trivial, then
\[
    H^2\bigl(\mathbb Z,m\mathbb Z;\U(Z(R))\bigr)
    \cong
    \U(Z(R))/\U(Z(R))^m.
\]
Consequently, for the fixed Picard homomorphism $\wpic$, the lift is unique up
to equivalence if and only if every central unit of $R$ admits an $m$th root.

\subsection{The double covering \texorpdfstring{$\mathrm{Spin}(2)\to\mathrm{SO}(2)$}{Spin(2)→SO(2)}}

The geometric motivation for the lifting problem is already visible in dimension two. 
For an oriented Riemannian surface, the oriented orthonormal frame bundle is a principal $\mathrm{SO}(2)$-bundle, and a spin structure is a lift of this bundle along the double covering $q: \mathrm{Spin}(2) \to \mathrm{SO}(2)$.
The simplest instance is obtained by taking the base space to be a point: $\mathrm{SO}(2)$ is then a principal $\mathrm{SO}(2)$-bundle, and $\mathrm{Spin}(2)$ provides its lift. 
The following example is the algebraic counterpart of this basic fibrewise model.

Under the standard identifications $\mathrm{Spin}(2)\cong\mathbb T\cong\mathrm{SO}(2)$,
the double covering is given by $q(z)=z^2$. 
The induced map on character groups is $q^*: \mathbb Z \to \mathbb Z$, $n \mapsto 2n$.
Thus, on the level of character groups, the covering corresponds to the
inclusion $2\mathbb Z\leq\mathbb Z$.

Passing to the algebras of representative functions, let
\[
    \wS : =\mathbb C[u,u^{-1}]
\]
with its canonical strong $\mathbb Z$-grading $\wS_n := \mathbb C u^n$, $n \in \mathbb Z$.
The pullback along $q$ identifies the algebra corresponding to $\mathrm{SO}(2)$ with the subring
\[
    S := \mathbb C[u^2,u^{-2}] = \bigoplus_{n \in 2\mathbb Z} \mathbb C u^n,
\]
This is a strongly $2\mathbb Z$-graded ring with $S_n = \mathbb C u^n$, $n \in 2 \mathbb Z$, and $\wS$ is a $\mathbb Z$-lift of $S$. 
Thus a covering of compact groups becomes an extension of strong gradings after passing to character groups.

The Picard homomorphism of $S$ is trivial, and we consider its trivial
extension to $\mathbb Z$. Applying \cref{lem:cyclic-lifts} with $m=2$ gives
\[
    H^2(\mathbb Z,2\mathbb Z;\mathbb C^\times)
    \cong
    \mathbb C^\times/(\mathbb C^\times)^2
    =
    0.
\]
Consequently, $\mathbb C[u,u^{-1}]$ is, up to equivalence, the unique $\mathbb Z$-lift of
$S$ with trivial Picard homomorphism.

\subsection{Crossed products}

Suppose that every homogeneous component $S_g$ contains a unit of $S$.
Choose a unit $u_g\in S_g$ for each $g\in G$, with $u_e=1$. 
Then $S_g = Ru_g = u_gR$ for all $g \in G$.
Define
\[
    \sigma_g(r) := u_g r u_g^{-1},
    \qquad
    \text{and}
    \qquad
    \omega(g,h) := u_g u_h u_{gh}^{-1}
\]
for all $g,h \in G$ and $r \in R$. 
The resulting maps 
\[
    \sigma: G \to \Aut(R)
    \qquad
    \text{and}
    \qquad
    \omega: G \times G \to \U(R)
\]
satisfy
\begin{align}
    \sigma_g\circ\sigma_h
    &=
    \Ad(\omega(g,h))\circ\sigma_{gh},
    \label{eq:crossed-action}
    \\
    \omega(g,h)\omega(gh,k)
    &=
    \sigma_g(\omega(h,k))\omega(g,hk)
    \label{eq:crossed-cocycle}
\end{align}
for all $g,h,k\in G$, together with $\sigma_e = \id_R$ and $\omega(e,g) = \omega(g,e) = 1$.
Such a pair $(\sigma,\omega)$ is called a \emph{normalized crossed system} for $(G,R)$.

Conversely, every normalized crossed system $(\sigma,\omega)$ for $(G,R)$ determines a strongly $G$-graded ring 
\[
    R\ast_{\sigma,\omega}G := \bigoplus_{g \in G} Ru_g
\]
with multiplication
\[
    (ru_g) (su_h)
    :=
    r \sigma_g(s) \omega(g,h)u_{gh}
\]
for all $g,h\in G$ and $r,s\in R$. 
This ring is the \emph{crossed product} associated with~$(\sigma,\omega)$.

Passing to outer automorphisms in \eqref{eq:crossed-action} gives a group homomorphism
\[
    \sigma: G \to \mathrm{Out}(R),
    \qquad
    g \mapsto [\sigma_g],
\]
which, for simplicity, is denoted by the same symbol.
For $\tau \in \Aut(R)$, let $R_\tau$ denote $R$ with its usual left action and right action
\[
    r \cdot a := r \tau(a).
\]
The assignment $\mathrm{Out}(R) \to \Pic(R)$, $[\tau] \mapsto [R_\tau]$ is an injective group homomorphism, and the Picard homomorphism of $S$ is the composite
\[
    G \xrightarrow{\;\sigma\;} \mathrm{Out}(R) \longrightarrow \Pic(R).
\]
A crossed-product $\wG$-lift of $S$ is obtained from a normalized crossed system $(\wsig,\wom)$ for $(\wG,R)$ satisfying $\wsig_g = \sigma_g$ and $\wom(g,h) = \omega(g,h)$ for all $g,h \in G$. 
Thus the crossed-product lifting problem asks whether $(\sigma,\omega)$ extends from $G$ to $\wG$.

A necessary first step is to extend the outer action to a homomorphism
\[
    \wsig: \wG \to \mathrm{Out}(R).
\]
Composing with the canonical embedding $\mathrm{Out}(R) \hookrightarrow \Pic(R)$ gives an extension
$\wpic: \wG \to \Pic(R)$ of the Picard homomorphism of $S$. 
This determines only the Picard classes of the additional homogeneous components. 
To obtain a crossed-product lift, one must also choose representatives $\wsig_x \in \Aut(R)$ and units $\wom(x,y)\in\U(R)$ satisfying \eqref{eq:crossed-action}--\eqref{eq:crossed-cocycle} and extending the original crossed system. 
By \cref{thm:obstruction}, such choices exist if and
only if
\[
    0 = \kappa_S(\wpic) \in H^3_{\wpic}(\wG,G;\U(Z(R))).
\]

We next describe equivalence in terms of crossed systems. 
Let $b: \wG \to \U(R)$ satisfy $b_g = 1$ for all $g \in G$. 
Replacing the homogeneous units by
\[
    u_x^b := b_x u_x,
    \qquad
    x \in \wG,
\]
changes the extended crossed system according to
\begin{align*}
    \wsig_x^{\,b}
    &=
    \Ad(b_x) \circ \wsig_x,
    \\
    \wom^{\,b}(x,y)
    &=
    b_x \wsig_x(b_y) \wom(x,y)b_{xy}^{-1}
\end{align*}
for all $x,y \in \wG$. 
Consequently, equivalence classes of crossed-product $\wG$-lifts of $S$ correspond to extensions of $(\sigma,\omega)$ modulo these transformations.

For a fixed extension $\wsig: \wG \to \mathrm{Out}(R)$ of $\overline{\sigma}$, let $\wpic: \wG \to \Pic(R)$ be the induced Picard homomorphism. 
Since its values are represented by twisted bimodules $R_{\wsig_x}$, $x \in \wG$, the lifts realizing $\wpic$ are crossed products. 
Thus \cref{thm:classification} recovers the fact that, when such lifts exist, their equivalence classes form a torsor under
\[
    H^2_{\wpic}(\wG,G;\U(Z(R))).
\]

If $R$ is commutative, then $\mathrm{Inn}(R)$ is trivial and hence $\mathrm{Out}(R) = \Aut(R)$. 
In this case $\sigma: G \to \Aut(R)$ is an action, and $\omega$ is a normalized $2$-cocycle for the induced action on $\U(R)$. 
Fix an extension $\wsig: \wG \to \Aut(R)$ of $\sigma$. 
A crossed-product $\wG$-lift of $S$ inducing $\wsig$ exists if and only if
\[
    [\omega]
    \in
    \operatorname{im}
    \!
    \left(
        H^2_{\wsig}(\wG,\U(R)) \to H^2_\sigma(G,\U(R))
    \right),
\]
where the map is induced by restriction. 
When such lifts exist, their equivalence classes form a torsor under
\[
    H^2_{\wsig}(\wG,G;\U(R)).
\]

\subsection{A Picard obstruction}

Even a crossed product need not admit a lift, since its outer action may fail
to extend to the overgroup. 
Let $\Bbbk = \mathbb F_\ell$ be a prime field and set $R := \Bbbk \times \Bbbk$.
The ordinary Picard group of $R$ is trivial, while $\Aut(R)$ is generated by the transposition $\tau:R \to R$, $\tau(a,b) := (b,a)$.
Consequently, the bimodule Picard group considered here is
\[
    \Pic(R) \cong \mathrm{Out}(R) \cong C_2,
\]
with nontrivial element represented by $R_\tau$.

Let $\wG =C_4 = \langle x\rangle$ and $G =\langle x^2\rangle \cong C_2$.
Let the nontrivial element $x^2\in G$ act on $R$ by $\tau$, and consider the crossed product $S :=R \rtimes_\tau G$.
Its Picard homomorphism $p: G \to \Pic(R)$ maps $x^2$ to the nontrivial element $[R_\tau]$.

Suppose that $p$ extended to a homomorphism $\wpic: C_4 \to \Pic(R)$.
Then
\[
    [R_\tau]
    =
    p(x^2)
    =
    \wpic(x^2)
    =
    \wpic(x)^2.
\]
This is impossible because every element of
$\Pic(R)\cong C_2$ has trivial square. Thus the Picard homomorphism of $S$
does not extend from $G$ to $\wG$, and Corollary~\ref{cor:picard-necessary} shows that $S$ admits no $C_4$-lift.

\subsection{Twisted group rings}

Twisted group rings give a particularly transparent special case of the crossed-product construction. 
Let $U := \U(Z(R))$ with the~trivial $\wG$-action, and let $\omega \in Z^2(G,U)$ be a normalized $2$-cocycle. 
The associated twisted group ring 
\[
    S : =R^\omega[G]
\]
is the free left $R$-module with basis $(u_g)_{g\in G}$ and multiplication
\[
    (ru_g)(su_h)
    :=
    rs\,\omega(g,h)u_{gh}
\]
for all $r,s\in R$ and $g,h\in G$. This is the crossed product associated
with the trivial action of $G$ on $R$ and the cocycle $\omega$. Its
homogeneous components are isomorphic to the standard $R$-bimodule $R$, so
its Picard homomorphism is trivial.

A $\wG$-lift with trivial Picard homomorphism is determined by a normalized
cocycle
\[
    \wom\in Z^2(\wG,U)
\]
whose restriction to $G\times G$ is $\omega$. 
Such a cocycle exists if and only if
\[
    [\omega]
    \in
    \operatorname{im}
    \!
    \left(
        H^2(\wG,U) \to H^2(G,U)
    \right).
\]
Indeed, an extension of the cohomology class can be modified by a coboundary so that its restriction agrees with $\omega$ itself.

In the long exact cohomology sequence of the pair $(\wG,G)$, the relative characteristic class is
\[
    \kappa_S(\wpic)
    =
    \delta[\omega]
    \in
    H^3(\wG,G;U),
\]
where $\delta:H^2(G,U) \to H^3(\wG,G;U)$ is the connecting homomorphism. 
Thus the vanishing of $\kappa_S(\wpic)$ is precisely the condition that $[\omega]$ extend to
$\wG$. 
When it vanishes, the equivalence classes of lifts with trivial Picard homomorphism form a torsor under
\[
    H^2(\wG,G;U).
\]

For $\omega = 1$, the inclusion $R[G] \subseteq R[\wG]$ gives the canonical lift of the ordinary group ring.

\subsection{Semidirect-product lifts}\label{sec:semi}

We conclude with an application of the normal-subgroup recognition theorem.
Let $Q$ be a group acting on $G$ and $S$ by group and ring automorphisms, respectively, and denote the latter action by
\[
    \beta: Q \to \Aut(S),
    \qquad
    q \mapsto \beta_q.
\]
Assume that the two actions are compatible with the grading, that is, $\beta_q(S_g) = S_{q \cdot g}$ for all $q \in Q$ and $g \in G$.
Form the semidirect product and skew group ring
\[
    \wG := G \rtimes Q
    \qquad
    \text{and}
    \qquad
    \wS := S \rtimes_\beta Q,
\]
respectively.
Writing $u_q$ for the canonical unit associated with $q \in Q$, multiplication
in $\wS$ is determined by $u_q s = \beta_q(s) u_q$ for all $q \in Q$ and $s \in S$.

Define
\[
    \wS_{(g,q)} := S_g u_q,
    \qquad
    (g,q) \in G \rtimes Q.
\]
For all $g,h\in G$ and $q,r\in Q$, one finds that
\[
    (S_gu_q) (S_hu_r)
    \subseteq
    S_g \beta_q(S_h) u_{qr}
    =
    S_{g(q\cdot h)} u_{qr}
    =
    \wS_{(g,q)(h,r)}.
\]
Thus
\[
    \wS
    =
    \bigoplus_{(g,q) \in G \rtimes Q} \wS_{(g,q)}
\]
is a $\wG$-graded ring. 
Its subgroup grading over $G\cong G\times\{e\}$ is the given grading of $S$, because $\wS_{(g,e)} = S_g$ for all $g \in G$.

The subgroup $G$ is normal in $G\rtimes Q$, with quotient isomorphic to $Q$.
The~homogeneous components of the corresponding quotient grading are
\[
    \wS_{[q]}
    =
    \bigoplus_{g \in G} S_gu_q
    =
    S u_q,
    \qquad
    q \in Q.
\]
Since every component $Su_q$ contains the unit $u_q$, the quotient grading is strong. 
It therefore follows from Corollary~\ref{cor:normal-lift} that $\wS$ is a $(G \rtimes Q)$-lift of $S$.

Although $\wS$ is a skew group ring over $S$, its $(G \rtimes Q)$-grading need not be a crossed-product grading over $R$. 
Indeed, the component $S_gu_q$ contains a unit if and only if $S_g$ does. 
Hence the lift constructed above is a crossed product precisely when the original strongly $G$-graded ring $S$ is a crossed product.

\section*{Acknowledgments}

The second author thanks Johan \"{O}inert for valuable discussions and
correspondence concerning this work.

\section*{Conflict of Interest}

The authors declare no conflict of interest.

\section*{Data Availability Statement}

Data sharing not applicable to this article as no datasets were generated or analysed during the current study.

\bibliographystyle{amsplain}
\bibliography{LSGRO}

@article{ClHaRi19,
 author = {Clark, Lisa Orloff and Hazrat, Roozbeh and Rigby, Simon W.},
 title = {Strongly graded groupoids and strongly graded {Steinberg} algebras},
 fjournal = {Journal of Algebra},
 journal = {J. Algebra},
 issn = {0021-8693},
 volume = {530},
 pages = {34--68},
 year = {2019},
 language = {English},
 doi = {10.1016/j.jalgebra.2019.03.030},
 zbMATH = {7060811},
 Zbl = {1444.16042}
}

@book{friedrich2000,
    author     = {Friedrich, Thomas},
    title      = {Dirac Operators in Riemannian Geometry},
    series     = {Graduate Studies in Mathematics},
    volume     = {25},
    publisher  = {American Mathematical Society},
    address    = {Providence, RI},
    year       = {2000},
    note       = {Translated by Andreas Nestke}
}

@book{MacLane95,
    author     = {Mac Lane, Saunders},
    title      = {Homology},
    series     = {Classics in Mathematics},
    publisher  = {Springer-Verlag},
    address    = {Berlin},
    year       = {1995},
    note       = {Reprint of the 1975 edition}
}

@book{Mont93,
    author     = {Montgomery, Susan},
    title      = {Hopf Algebras and Their Actions on Rings},
    series     = {CBMS Regional Conference Series in Mathematics},
    volume     = {82},
    publisher  = {American Mathematical Society},
    address    = {Providence, RI},
    year       = {1993}
}

@book{NuOy04,
    author     = {N{\u{a}}st{\u{a}}sescu, Constantin and
                  Van Oystaeyen, Freddy},
    title      = {Methods of Graded Rings},
    series     = {Lecture Notes in Mathematics},
    volume     = {1836},
    publisher  = {Springer-Verlag},
    address    = {Berlin},
    year       = {2004}
}

@mastersthesis{Hus26,
    author  = {Husen, Emma},
    title   = {{Hopf--Galois Theory as a Unifying Framework for Symmetry
               with a View towards Lifting Problems}},
    school  = {Linnaeus University},
    address = {V{\"a}xj{\"o}, Sweden},
    year    = {2026}
}

@phdthesis{Lann21,
    author  = {L{\"a}nnstr{\"o}m, Daniel},
    title   = {The Structure of Epsilon-Strongly Group Graded Rings},
    school  = {Blekinge Institute of Technology},
    address = {Karlskrona, Sweden},
    year    = {2021}
}

@article{Tak59,
    author  = {Takasu, Sat{\^{o}}},
    title   = {Relative Homology and Relative Cohomology Theory of Groups},
    journal = {J. Fac. Sci. Univ. Tokyo Sect. I},
    volume  = {8},
    pages   = {75--110},
    year    = {1959}
}

@misc{Wa25,
    author       = {Wagner, Stefan},
    title        = {Noncommutative Principal Bundles and Central Extensions},
    year         = {2025},
    note = {Preprint, arXiv:2509.02263},
    archivePrefix = {arXiv},
    primaryClass = {math.OA}
}

\end{document}